\documentclass[a4paper]{article}

\usepackage{amsmath}
\usepackage[symbol*]{footmisc}
\usepackage{amssymb}
\usepackage{amsthm}
\usepackage{pstricks}
\usepackage{mathdots}
\usepackage{epsfig}
\usepackage[left=2.5cm,top=3cm,right=3cm,bottom=2cm]{geometry}

\DefineFNsymbolsTM{myfnsymbols}{% def. from footmisc.sty "bringhurst" symbols
  \textasteriskcentered *
  \textdagger    \dagger
  \textdaggerdbl \ddagger
  \textsection   \mathsection
  \textbardbl    \|%
  \textparagraph \mathparagraph
}%
\setfnsymbol{myfnsymbols}

\newtheorem{proposition}{Proposition}[section]
\newtheorem{theorem}[proposition]{Theorem}
\newtheorem{lemma}[proposition]{Lemma}
\newtheorem{corollary}[proposition]{Corollary}

\newtheorem{problem}[proposition]{Problem}

\newenvironment{proofmain}{\textsc{Proof of Theorem~\ref{main}.}}{\hfill$\square$\medskip}

\newenvironment{proofliczbaszesc}{\textsc{Proof of Corollary~\ref{six}.}}{\hfill$\square$\medskip}

\numberwithin{equation}{section}

\DeclareMathOperator{\Gl }{Gl}

\DeclareMathOperator{\kk}{K}
\DeclareMathOperator{\emm}{M}

\title{Algebras with finitely many conjugacy classes of left ideals versus algebras of finite
 representation type}
\date{}

\author{Arkadiusz M\c{e}cel \and Jan Okni\'nski}

\begin{document}

\maketitle

\begin{center}
\textbf{2010 Mathematics Subject Classification:}\\ 16P10, 16D99,16G60, 20M99
\end{center}

\begin{center}
\textbf{Key words and phrases:}\\ finite dimensional algebra, left ideal, semigroup, conjugacy class, finite representation type
\end{center}

\vspace{1cm}

\begin{abstract}
Let $A$ be a finite dimensional algebra over an algebraically
closed field with the radical nilpotent of index $2$. It is shown
that $A$ has finitely many conjugacy classes of left ideals if and
only if $A$ is of finite representation type provided that all
simple $A$-modules have dimension at least $6$.
\end{abstract}

\section{Introduction} Let $A$ be a finite dimensional unital
algebra over a field $\kk$ and let $U(A)$ denote the group of units
of $A$. Following \cite{okr}, we denote by $C(A)$ the semigroup of
conjugacy classes of left ideals of $A$, with respect to the
natural operation $[L_1][L_2] = [L_1L_2]$ for $L_{1},L_{2}\in
L(A)$. Here $L(A)$ stands for the set of left ideals of $A$ and
$[L]$ is the conjugacy class of $L\in L(A)$ in $A$. A study of
$C(A)$ is in part motivated by a general program of searching for
semigroup invariants of associative algebras \cite{mec}. The semigroup
 $C(A)$ is also related to the subspace semigroup of an associative
 algebra, studied in \cite{ok2, okp}, which is an analogue
of the semigroup of closed subsets in an algebraic monoid. In the
context of ring theory, various related actions of $U(A)$ have
been considered on a ring $A$, see \cite{hry}, and also
\cite{ha1,ha2,hir}, leading to certain finiteness conditions for
$A$.

Finite dimensional algebras seem to be of a particular interest
from the point of view of finiteness of $C(A)$. The class of
algebras with $C(A)$ finite includes in particular every algebra
of finite representation type, see \cite{okr}, Theorem~6. Recall
that these are algebras with finitely many isomorphism classes of
finite dimensional indecomposable left modules. On the other hand, 
the celebrated second Brauer-Thrall conjecture, proved
by Nazarova and Roiter, can be used to show that if $\kk$ is algebraically closed 
then the fact that $C(\emm_{n}(A))$ is
finite for every $n\geq 1$ implies that $A$ is of finite
representation type, \cite{okr}, Theorem~7. 

One of the motivations is that certain numerical invariants and certain structural invariants (in terms of the finite semigroup $C(A)$) might provide new tools in the study of considered classes of algebras. The second motivation, and the aim of this paper, is to find a new internal characterization of algebras of finite representation type. We do this for a natural class of algebras, namely, the
class of algebras $A$ over an algebraically closed field $\kk$ with
the radical $J(A)$ nilpotent of index $2$. A study of this class
of algebras is motivated on one hand by the role it plays in
representation theory of arbitrary algebras, see \cite{pie}, and
on the other hand by the fact that within this class the semigroup
$C(A)$ determines the algebra $A$ up to isomorphism, see
\cite{mec}, Theorem~1.2.
In particular, our work is motivated by the following problem.\\

\noindent {\bf Problem.} Let $A$ be a finite dimensional algebra
over an algebraically closed field. Assume that $J(A)^2=0$ and the
lattice $I(A)$ of ideals of $A$ is distributive (we then simply
say that $A$ is distributive). Find necessary and sufficient
conditions for $A$ in order that $C(A)$ is a finite semigroup.\\

Some partial results in this direction can be found in \cite{mec2}.
The approach adopted there is based on matrix problems arising from certain abstract combinatorial structures,
 called skeletons. However, the results of the present paper are based on a reformulation of the finiteness problem of $C(A)$ in the language of quivers and their representations.

Assume that $\kk$ is an algebraically closed field and $A$ is a
finite dimensional $\kk$-algebra. Fix a maximal subset $\{e_1, e_2,
\ldots , e_k\}$ of a complete set of orthogonal primitive
idempotents of $A$ such that $Ae_i \not\simeq Ae_j$ as left
$A$-modules, for any $i \neq j$. Then the
directed graph $\Gamma(A) = (\Gamma(A)_0,\Gamma(A)_1)$ is called the (ordinary) quiver of $A$ if
the set of vertices $\Gamma(A)_0 = \{1, 2, \ldots , k\}$ and given $a, b \in \Gamma(A)_0$,
the arrows $\alpha: a \to b$ that constitute the set $\Gamma(A)_1$ are in a bijective correspondence with the vectors in a basis of the $\kk$-vector space $e_a(J(A)/J(A)^2)e_b$.  With $\Gamma(A)$ one associates  the separated quiver $\Gamma^s(A) =
(\Gamma^s(A)_0,\Gamma^s(A)_1)$, where $\Gamma^s(A)_0 = \Gamma(A)_0 \times \{0, 1\}$ and $\Gamma^s(A)_1 = \{((i, 0),
(j, 1)) | (i, j) \in \Gamma^s(A)_1\}$.

Let us recall some known facts about $C(A)$. The first one is an obvious finiteness condition
connected with the lattice of two sided ideals $I(A)$ of $A$. It is known that $I(A)$ is distributive if and only if $I(A)$
is finite, see \cite{pie}, \S2.2, Exercise 4; and \S2.6, Exercise
3; whence this is a necessary condition for $C(A)$ to be finite.
In case $A$ is a basic algebra (that is, $A/J(A)$ is a direct
product of copies of the field $\kk$), a solution to the above
problem was given in \cite{okr}, Theorem 12.

\begin{theorem}[\cite{okr}, Theorem 12]\label{renner} Let $A$ be a finite dimensional
basic algebra over an algebraically closed field $\kk$. Assume that
$J(A)^2 = 0$ and the lattice of ideals of $A$ is distributive.
Then $C(A)$ is finite if and only if the separated quiver
$\Gamma^s(A)$ has no cycles (as an unoriented graph) and $\dim
(eJ(A))\leq 3$ for every primitive idempotent $e$ of $A$.
\end{theorem}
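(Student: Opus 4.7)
The plan is to translate the finiteness of $C(A)$ into a finite-orbit question for the action of $U(A)$ on certain subspace configurations indexed by the bipartite graph $\Gamma^s(A)$. Fix a complete set $\{e_1,\ldots,e_k\}$ of orthogonal primitive idempotents. Since $A$ is basic and $J(A)^2=0$, the group of units splits as $U(A)=T\cdot(1+J(A))$ with $T=\{\sum_i\lambda_i e_i:\lambda_i\in\kk^*\}\cong(\kk^*)^k$. A short computation using $J(A)^2=0$ shows that conjugation by $1+j$ fixes $J(A)$ pointwise, whereas conjugation by an element of $T$ scales each piece $e_a J(A)e_b$ by the character $\lambda_a\lambda_b^{-1}$; the distributivity of $I(A)$ forces $\dim e_a J(A)e_b\le 1$ for all $a,b$, so no multiple arrows appear in $\Gamma(A)$ and $\Gamma^s(A)$ is a simple bipartite graph.

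Any left ideal $L$ of $A$ is determined, up to conjugation by $1+J(A)$, by the set $S\subseteq\{1,\ldots,k\}$ with $L/(L\cap J(A))=\bigoplus_{a\in S}\kk\bar e_a$ together with the tuple of subspaces $W_a=e_a(L\cap J(A))\subseteq e_a J(A)$ for each $a$. Indeed, for each $a\in S$ the lift $e_a+j_a\in L$ forces $J(A)e_a\subseteq L\cap J(A)$ via left multiplication by $J(A)$, the components $e_p j_a$ of $j_a$ with $p\ne a$ already lie in $L\cap J(A)$, and the residual $e_a J(A)$-part of $j_a$ is absorbed by a further $(1+J(A))$-conjugation. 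The remaining freedom is the action of the torus $T$ on $(W_a)_a$, acting coordinatewise on each $e_a J(A)=\bigoplus_b e_a J(A)e_b$ via the independent characters $\lambda_a\lambda_b^{-1}$, with the subspaces at different vertices $a$ coupled only through the shared sink parameters $\lambda_b$. Thus $C(A)$ is finite if and only if, for each choice of $S$, this torus has finitely many orbits on the admissible tuples.

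For necessity, if $\dim e_a J(A)\ge 4$ for some $a$ then by distributivity $e_a J(A)=\bigoplus_{i=1}^{n}\kk x_i$ with $n\ge 4$ pairwise distinct targets, so the induced torus on $e_a J(A)$ has rank $n$. The orbit space $\mathrm{Gr}(2,n)/T$ is positive-dimensional (parametrised by the classical cross-ratio when $n=4$), yielding infinitely many pairwise non-conjugate two-dimensional subspaces of $e_a J(A)$ and hence infinitely many non-conjugate left ideals in $J(A)$. If instead $\Gamma^s(A)$ has a cycle, pick one of minimal length $2m$, corresponding in $\Gamma(A)$ to vertices $a_1,\ldots,a_m$ and $b_1,\ldots,b_m$ with arrows $a_i\to b_i$ and $a_i\to b_{i-1}$ (indices modulo $m$). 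For $\lambda_1,\ldots,\lambda_m\in\kk^*$ set $W_{a_i}=\kk(x_{a_i,b_{i-1}}+\lambda_i x_{a_i,b_i})$ and all other $W_c=0$; this defines a left ideal of $J(A)$ on which $T$ acts by $\lambda_i\mapsto\lambda_i\lambda_{b_{i-1}}\lambda_{b_i}^{-1}$, so the telescoping product $\prod_i\lambda_i$ is a $T$-invariant taking every value in $\kk^*$, and one obtains infinitely many non-conjugate left ideals.

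For sufficiency, assume $\Gamma^s(A)$ is a forest and $\dim e_a J(A)\le 3$ for every primitive idempotent $e_a$. Treat each tree component of $\Gamma^s(A)$ separately. At each source $(a,0)$ the space $e_a J(A)$ has dimension at most $3$ and is acted upon by a coordinate torus of the same rank; a direct check shows that each Grassmannian $\mathrm{Gr}(d,e_a J(A))$ with $d\le 3$ decomposes into finitely many orbits under such a torus. Rooting the tree and working from the leaves inward, the torus parameters $\lambda_b$ at successive sinks can be used to normalise each $W_a$ one at a time without ever being trapped by a global relation, precisely because no cycle exists to preserve one. Combined with the finite number of choices for $S$ (and its normalised lifts) this yields $|C(A)|<\infty$. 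The main obstacle in the argument is the bookkeeping of the joint $(1+J(A))$- and $T$-action: the forest condition is what rules out cycle-type invariants such as $\prod_i\lambda_i$, the outdegree bound is what rules out Grassmannian moduli on individual source spaces, and together they are just barely sufficient to force finiteness.
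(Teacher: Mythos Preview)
This theorem is not proved in the present paper; it is quoted from \cite{okr}, Theorem~12, and used only as background and motivation for Theorem~\ref{main}. There is therefore no proof here to compare your attempt against. A few remarks on the argument itself follow.

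Your reduction (nilpotent left ideals $\leftrightarrow$ tuples $(W_a)_a$ with $W_a\subseteq e_aJ(A)$, modulo the torus $T\cong(\kk^*)^k$) is exactly the specialisation to $r_i=1$ of what the paper proves in Propositions~\ref{reprezentacja} and~\ref{repr}. One small slip: for a \emph{left} ideal $L$ one has $u^{-1}Lu=Lu$, so the relevant action of $T$ is right multiplication, and $e_aJ(A)e_b$ is scaled by $\lambda_b$, not by the conjugation character $\lambda_a\lambda_b^{-1}$. This is harmless for your purposes, since the extra factor $\lambda_a^{-1}$ is a global scalar on $W_a$ and vanishes on subspaces.

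Your necessity arguments are correct. The cross-ratio on $\mathrm{Gr}(2,\kk^4)$ under the coordinate torus is a genuine continuous invariant when some $\dim e_aJ(A)\ge 4$, and the telescoping product $\prod_i\lambda_i$ along a cycle of $\Gamma^s(A)$ is $T$-invariant, so in either case $C(A)$ is infinite.

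The sufficiency direction, however, is only a heuristic. The phrase ``working from the leaves inward, the torus parameters $\lambda_b$ at successive sinks can be used to normalise each $W_a$ one at a time'' hides the actual work. The hypothesis bounds only the \emph{source} degrees in $\Gamma^s(A)$; sink degrees are unrestricted, so a single parameter $\lambda_b$ may be shared by many sources. After normalising one $W_a$, the parameters at its neighbouring sinks are constrained to the stabiliser, and one must check that enough freedom survives for the next source meeting one of those sinks. In a tree one can indeed order the sources so that each new $a$ has at most one previously processed neighbour, and then verify by hand that the residual torus (scaling all but one coordinate of $\kk^{\deg a}$, with $\deg a\le 3$) still has finitely many orbits on every $\mathrm{Gr}(d,\kk^{\deg a})$; but this inductive step, together with the bookkeeping of how the one remaining scale propagates, has to be written out. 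As it stands the last paragraph asserts the conclusion rather than proving it.
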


Clearly, the matrix algebra $A = \emm_n(\kk[x]/(x^2))$ is of
finite representation type for every $n\geq 1$, while it does not
satisfy the restriction on the dimension of $eJ(A)$ if $n>3$. It
is also easy to construct examples of algebras $A$ of infinite
type such that $C(A)$ is finite, see \cite{mec}, Example~4.7. This
can be accomplished via the following classical result.\\

\noindent \textbf{Theorem (Gabriel, \cite{pie}, 11.8).} \textit{Let $A$ be a finite dimensional algebra
over an algebraically closed field. Assume that $A$ is
distributive and  $J(A)^2 = 0$. Then $A$ is of finite
representation type if and only if the separated quiver
$\Gamma^s(A)$ is a disjoint union of Dynkin graphs of types
$\mathbb{A}_n,\mathbb{D}_n,\mathbb{E}_6,\mathbb{E}_7,\mathbb{E}_8$.}\\

In contrast to
Theorem~\ref{renner}, if the algebra is not basic, not only the
structure of $\Gamma^s(A)$ but also the sizes of the simple blocks
of $A/J(A)$ will play a key role.  In this context, we refer to \cite{mec2}, Theorem 2.8 for a far reaching extension of Theorem 1.1. Our main result reads as
follows.

\begin{theorem} \label{main} Let $A$ be a finite dimensional algebra
over an algebraically closed field $\kk$. Assume that $A/J(A)\cong
\emm_{r_1}(\kk) \oplus \emm_{r_2}(\kk) \oplus \cdots \oplus \emm_{r_k}(\kk)$,
with $r_{i}\geq 6$ for every $i$, and $J(A)^{2}=0$. Then $C(A)$ is
finite if and only if $A$ is of finite representation type.
\end{theorem}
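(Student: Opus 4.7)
The direction ``$A$ of finite representation type $\Rightarrow$ $C(A)$ finite'' is immediate from [okr, Theorem~6] and needs no further argument, so I concentrate on the converse. Suppose $C(A)$ is finite. Every two-sided ideal of $A$ is fixed under $U(A)$-conjugation and hence forms a singleton class in $C(A)$, so $|I(A)|\leq|C(A)|<\infty$ and $I(A)$ is distributive. By Gabriel's theorem, recalled in the excerpt, it suffices to prove that $\Gamma^s(A)$ is a disjoint union of Dynkin diagrams of types $\mathbb{A}$, $\mathbb{D}$, $\mathbb{E}$.

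I would prove this contrapositively by building a dictionary between left ideals of $A$ and representations of $\Gamma^s(A)$, as indicated in the introduction. Because $J(A)^{2}=0$, each left ideal $L$ is determined by the triple $(T,B,\phi)$, where $T=L/(L\cap J(A))\hookrightarrow A/J(A)$ is the semisimple top, $B=L\cap J(A)\hookrightarrow J(A)$ is the semisimple bottom, and $\phi\colon J(A)\otimes_{A/J(A)}T\to B$ is the restriction of the multiplication map; this triple is precisely a representation of $\Gamma^s(A)$ with the dimension at each vertex $(i,0)$ bounded by $r_i$. Writing $u\in U(A)$ as a product of a semisimple element in $(A/J(A))^{\times}$ and a unipotent element of $1+J(A)$, conjugation descends to the natural isomorphism action of $\prod_i\Gl_{r_i}(\kk)$ on such representations; the unipotent factor is absorbed by the semisimple action, the cancellations using $J(A)^{2}=0$. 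Thus finiteness of $C(A)$ is equivalent to the existence of only finitely many isomorphism classes of representations of $\Gamma^s(A)$ with the above dimension bound.

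Now suppose some connected component of $\Gamma^s(A)$ is not Dynkin; then it contains an extended Dynkin diagram $\tilde{\mathbb{A}}_n$, $\tilde{\mathbb{D}}_n$, or $\tilde{\mathbb{E}}_{6,7,8}$ as an unoriented subgraph. For each such diagram the corresponding quiver admits a one-parameter family of pairwise non-isomorphic indecomposable representations whose dimension vector is the minimal imaginary (null) root; the largest coordinate of any such null root, ranging over all extended Dynkin types, is $6$, attained only at the central node of $\tilde{\mathbb{E}}_8$. The assumption $r_i\geq 6$ for every $i$ is therefore exactly what is needed so that the null-root dimension vector lies within the admissible range, producing infinitely many non-conjugate left ideals and contradicting the finiteness of $C(A)$. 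The main obstacle is the dictionary itself: one must construct explicit left ideals realizing prescribed representation data and verify that the unipotent part $1+J(A)$ of $U(A)$ introduces no identifications beyond the $\prod_i\Gl_{r_i}(\kk)$-action; this bookkeeping, which is automatic in the basic case of Theorem~\ref{renner}, is where the non-basic combinatorics and the threshold $r_i\geq 6$ genuinely enter.
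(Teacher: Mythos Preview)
Your overall strategy—translate the conjugacy problem into an isomorphism problem for quiver representations, then invoke the Tits argument on an Euclidean subgraph, using that every null root has all coordinates $\leq 6$—is exactly the paper's. The endgame in your last paragraph is essentially Lemma~\ref{podkontur} together with the list of null roots.

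Where you diverge is in the dictionary, and here there is a real gap. You encode an arbitrary left ideal $L$ by the embedded triple $(T,B,\phi)$ and assert this is a representation of $\Gamma^s(A)$ with dimension bounded by $r_i$ at $(i,0)$. Two problems. First, the direction you need is \emph{from representations to ideals}: to derive a contradiction you must realise each member of the one-parameter null-root family as an actual left ideal, and nothing in the triple description guarantees that an abstract $(T,B,\phi)$ arises from some $L\subseteq A$ (indeed many do not—$\phi$ is constrained by the multiplication in $A$). You flag this as ``the main obstacle'' but do not resolve it. Second, your bookkeeping is off: unwinding $\phi_{ij}\colon J_{ij}\otimes T_j\to B_i$ shows that $T_j$ sits at the vertex $(j,1)$ and $B_i$ at $(i,0)$ of the \emph{reversed} quiver $\overline{\Gamma^s(A)}$, with bounds $r_j$ and $a_i$ respectively; you have stated only half of this, and on the wrong vertices.

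The paper sidesteps both issues by restricting to \emph{nilpotent} left ideals $L\subseteq J(A)$, which suffices since $C(A)$ is finite iff $C_J(A)$ is (cited from \cite{mec}). For such $L$ one has $T=0$, and Propositions~\ref{reprezentacja}--\ref{repr} give a genuine bijection between conjugacy classes in $C_J(A)$ and $G(d)$-orbits on $\mathbb{A}_Q(d)$ for the \emph{single fixed} dimension vector $d=(a_1,\dots,a_k,r_1,\dots,r_k)$, via row-spaces of block matrices. Every point of the representation variety then corresponds to an ideal, so the Tits argument applies directly; and since each $a_i=\sum_{j\in J_i}r_j\geq 6$ as well, all coordinates of $d$ dominate the null root. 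Your approach could perhaps be completed, but the paper's is both cleaner and avoids the surjectivity problem you yourself identify.
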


The following consequence complements Theorem~7 in \cite{okr}, mentioned above.

\begin{corollary}\label{six}
Let $A$ be a finite dimensional algebra over an algebraically
closed field and let\break $J(A)^2 = 0$. Then $A$ is of finite
representation type if and only if the semigroup $C(\emm_6(A))$ is
finite.
\end{corollary}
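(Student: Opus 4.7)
The plan is to reduce Corollary~\ref{six} directly to Theorem~\ref{main} applied to the algebra $B := \emm_6(A)$, and then to eliminate the passage from $A$ to $B$ via Morita equivalence. Both steps should be essentially immediate once the bookkeeping on Wedderburn components is done.

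First I would verify that $B = \emm_6(A)$ satisfies the hypotheses of Theorem~\ref{main}. Since the radical of a full matrix algebra is the matrix algebra over the radical, $J(B) = \emm_6(J(A))$, so $J(B)^2 = \emm_6(J(A)^2) = 0$. For the semisimple quotient, using that $\kk$ is algebraically closed we may write $A/J(A) \cong \emm_{r_1}(\kk) \oplus \cdots \oplus \emm_{r_k}(\kk)$ for some $r_1,\dots,r_k \geq 1$; then
\[
B/J(B) \;\cong\; \emm_6(A/J(A)) \;\cong\; \emm_{6r_1}(\kk) \oplus \cdots \oplus \emm_{6r_k}(\kk).
\]
Every block has size $6r_i \geq 6$, so Theorem~\ref{main} applies to $B$ and yields that $C(\emm_6(A))$ is finite if and only if $\emm_6(A)$ is of finite representation type.

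Second I would invoke the classical Morita equivalence between $A$ and $\emm_6(A)$: since the module categories are equivalent and finite-dimensional indecomposables correspond under the equivalence, $\emm_6(A)$ is of finite representation type if and only if $A$ is. Combining this with the previous equivalence gives the corollary.

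There is no genuine obstacle here beyond checking that the Wedderburn block sizes of $\emm_6(A)$ are all at least $6$, which is what forces the choice of index $6$ and explains the role of the bound $r_i \geq 6$ in Theorem~\ref{main}. Everything else is a formal application of Theorem~\ref{main} and of the invariance of representation type under Morita equivalence.
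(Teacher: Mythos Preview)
Your proof is correct and follows essentially the same route as the paper: verify that $B=\emm_6(A)$ has $J(B)^2=0$ and Wedderburn blocks of size $6r_i\geq 6$, apply Theorem~\ref{main} to $B$, and then pass between $A$ and $\emm_6(A)$ via Morita equivalence. The only cosmetic difference is that the paper treats the two implications separately (citing \cite{okr}, Theorem~7 for the forward direction and noting distributivity of $\emm_6(A)$ along the way), whereas you invoke Theorem~\ref{main} once to get both directions simultaneously; this is a harmless streamlining.
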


In Section~\ref{skeleton} we discuss the structure of nilpotent left ideals
of a radical square zero finite dimensional algebra $A$ and prove that the set of conjugacy
classes of these ideals is in 1-1 correspondence with a certain set of isomorphism classes
of representations of a quiver dual to the separated quiver of $A$. The proof of
the main theorem, relying also
on a classical geometric argument of Tits is then derived in
Section~\ref{proof}. In Section 4 we discuss the possible generalizations of the main result, including the role played
by the number $6$, and we state some problems.

\normalcolor

\section{Conjugacy classes of left ideals} \label{skeleton}

It is well known that algebras of finite representation type are distributive, \cite{pie}, Theorem 6.7. So, in view of the comment before Theorem 1.1, in this section we assume that $A$ is a finite dimensional distributive algebra over
 an algebraically closed field $\kk$, and that $J(A)^2=0$.
 First, we establish some notation. Assume that
\begin{equation} A/J(A) \simeq \emm_{r_1}(\kk) \oplus
 \emm_{r_2}(\kk) \oplus \cdots \oplus \emm_{r_k}(\kk),
\label{szerokosc}\end{equation} where $r_i, k$ are positive
integers. According to the classical Wedderburn-Malcev theorem we
have the following decomposition of $A$ into the direct sum of
linear subspaces:
 $$A = A' \oplus J(A) = A_1 \oplus A_2
\oplus A_3 \oplus \ldots \ldots \oplus A_k \oplus J(A),$$ where $A_i \simeq \emm_{r_i}(\kk)$ and $A' \simeq A/J(A)$.
Let $J_{ij} = f_iJ(A)f_j$ where $f_i$ is the unit in $A_i$.

It is well known and easy to prove that since $A$ is distributive and $J(A)^2 = 0$, it follows that $J_{ij}$ are minimal
subbimodules of the $A_i-A_j$-bimodule $f_iAf_j$, provided they
are nonzero, see \cite{pie}, \S 11.8.  In this case $J_{ij}$ are also two-sided ideals of
$A$. They can be, therefore, identified with the linear spaces of
rectangular matrices $\emm_{r_i \times r_j}(\kk)$. In the matrix
notation the following decomposition follows.
\begin{equation} A  = A' \oplus J(A) = \begin{bmatrix} A_1 & 0  & \ldots &0 \\
0 & A_2 &  \ldots & 0 \\
\vdots & \vdots & \ddots & \vdots \\ 0 & 0 &  \ldots & A_k
\end{bmatrix}
\oplus \begin{bmatrix} J_{11} & J_{12}  & \ldots & J_{1k} \\
J_{21} & J_{22}  & \ldots & J_{2k} \\
\vdots & \vdots &  \ddots & \vdots \\
J_{k1} & J_{k2}  & \ldots & J_{kk}
\end{bmatrix}\label{postacx}\end{equation} Hence, for $n := r_1 +
r_2 + \ldots + r_k$, the algebra $A$ can be identified with a
subalgebra of  $\emm_n(\kk[x]/(x^2))$, where $x^2 = 0$, and
$J_{ij} = f_iJ(A)f_j$, provided that it is nonzero, can be
identified with the respective sets of rectangular matrices of
sizes $r_i \times r_j$ with entries in the two-sided ideal of the
ring $\kk[x]/(x^2)$, generated by the coset of $x \in \kk[x]$.

Let $e_1, e_2, \ldots, e_n$ be the consecutive diagonal
idempotents of rank one in $A$. Put
 $r_i = n_{i+1} - n_{i}$, where  $0 = n_1 < n_2 < \ldots < n_{k+1} = n$. Then the sets
 \begin{equation} E_i = \{e_{n_i + 1}, e_{n_i + 2} , \ldots, e_{n_{i+1}}\}, \label{zbiorye}\end{equation}
defined for $i = 1, 2, \ldots, k$, satisfy the condition: $Ae_pA = Ae_qA$
if and only if $e_p, e_q$ belong to the same $E_i$. Moreover, $f_i := e_{n_{i} + 1} + \ldots + e_{n_{i+1}}$
 is a two-sided unity element of the algebra $A_i$, for $i =
1, 2, \ldots, k$, and the elements of $E_i$ are diagonal idempotents of rank one in $A_i$.

Consider the decomposition $\kk^{r_1} \oplus \ldots
\oplus \, \kk^{r_k}$ of the linear space $\kk^n$. For $1 \leq i \leq k$ let $J_i
:= \{j_{1}, j_2, \ldots, j_{t_i}\}$ be the sets of indices such that $J_{ij} \neq 0 \Leftrightarrow
j \in J_i$. We also define $\mathbb{V}_i := \{v \in \kk^n \, | \,
\pi_j(v) = 0, j \notin J_i\},$ where $\pi_j: \kk^n \to \kk^{r_j}$
is the natural projection.
For every $i = 1, \ldots, k$ define \begin{equation} a_i := r_{j_1} + \ldots + r_{j_{t_i}}, \text{ if } J_i \neq \emptyset, \text{ and otherwise } a_i = 0.\label{ai}\end{equation} Then $\mathbb{V}_i$ is isomorphic to $\kk^{a_i}$ as a linear
space. Moreover, for $e \in E_i$, we have $eJ(A) \simeq \mathbb{V}_i$.

Indeed, if $e_p, e_q$ are both orthogonal idempotents of rank one, contained in the same $E_i$, and if
 $e_{ts}$ are matrix units in $A_i$, for $n_i + 1 \leq t, s \leq n_{i+1}$, then
 $e_p = e_{pp}$ and $e_q = e_{qq}$, where $e_{pq}e_{qq}J(A) = e_{pp}J(A)$ and $e_{qp}e_{pp}J(A)
= e_{qq}J(A)$. Hence $eJ(A) \simeq \mathbb{V}_i$, for all $e \in E_i$.\\

We proceed with the main step towards an interpretation of our problem in the language of representations of quivers. Let $\mathcal{M}_A$ denote the set of block matrices $M = (M_{ij}) \in M_{(a_1 + \ldots + a_k) \times n}(\kk)$ such that $M_{ij} \in M_{a_i \times r_j}(\kk)$ and $M_{ij} = 0$ if $J_{ij} = 0$, for $i,j=1,\ldots ,k$. On this set, we have natural actions (by left and right multiplication, respectively) of the groups $G_{l}= \prod_{i=1}^{k}{\Gl_{a_i}(\kk)}$ and $G_{r} = \prod_{i =1}^{k}{\Gl_{r_i}(\kk)}$. 

\begin{proposition}\label{reprezentacja} There exists a bijection between
 the set $N(A)$ of nilpotent left ideals of $A$ and the set of $k$-tuples of linear subspaces $(V_1, \ldots, V_k)$, where $V_i \subseteq \mathbb{V}_i$, for
$1 \leq i \leq k$.

Moreover, there is a bijection between the set $C_J(A)$ of conjugacy classes of nilpotent left ideals in $A$ and the set of $G_{l}-G_{r}$ orbits on the set $\mathcal{M}_A$ of block matrices $M = (M_{ij}) \in M_{(a_1 + \ldots + a_k) \times n}(\kk)$ such that $M_{ij} \in M_{a_i \times r_j}(\kk)$ and $M_{ij} = 0$ if $J_{ij} = 0$, for $i,j=1,\ldots ,k$.
\end{proposition}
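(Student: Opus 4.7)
The plan is to prove the two bijections in sequence.

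For the first, since $J(A)^2=0$, a left ideal is nilpotent if and only if it is contained in $J(A)$. Given $L\subseteq J(A)$, the decomposition $1=f_1+\cdots+f_k$ yields $L=\bigoplus_{i=1}^k L_i$ with $L_i:=f_iL$, and each $L_i$ is a left $A_i$-submodule of $f_iJ(A)$ because $A_i\subseteq f_iAf_i$. Under the identification $f_iJ(A)\simeq\emm_{r_i\times a_i}(\kk)$ regarded as a left $A_i\simeq\emm_{r_i}(\kk)$-module (that is, as $a_i$ copies of the simple module $\kk^{r_i}$), I would use the matrix units $e_{pq}\in A_i$ to shift any row occurring in an element of $L_i$ to any other row position; this forces the $A_i$-submodules of $f_iJ(A)$ to be exactly the sets $\{M:\text{every row of }M\text{ lies in }V\}$, as $V$ ranges over subspaces of $\mathbb{V}_i\simeq\kk^{a_i}$. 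Taking $V_i:=eL_i\subseteq eJ(A)\simeq\mathbb{V}_i$ for any $e\in E_i$ then gives the first bijection.

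For the second bijection, I would first verify that conjugation by $U(A)$ factors through $U(A)/(1+J(A))\simeq U(A/J(A))=G_r$: for $u=1+j$ with $j\in J(A)$ and $L\subseteq J(A)$, all cross terms in $uLu^{-1}$ vanish because $J(A)^2=0$, so $uLu^{-1}=L$. For $u=(u_1,\ldots,u_k)\in G_r$, writing $f_i(uLu^{-1})=u_iL_iu^{-1}$ and working block by block on each $J_{ij}$, the new subspace $V_i'$ attached to $uLu^{-1}$ equals $V_i\cdot(u^{-1}|_{\mathbb{V}_i})$, where $u^{-1}|_{\mathbb{V}_i}=\prod_{j\in J_i}u_j^{-1}$ acts by block-diagonal right multiplication on $\mathbb{V}_i=\bigoplus_{j\in J_i}\kk^{r_j}$.

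The final step is to match tuples $(V_1,\ldots,V_k)$ modulo this action with $G_l$-$G_r$-orbits on $\mathcal{M}_A$. To a tuple I attach any matrix $M\in\mathcal{M}_A$ whose block row $i$ has row space equal to $V_i$ when restricted to the nonzero columns indexed by $J_i$; every $V_i\subseteq\mathbb{V}_i$ is realised by padding a basis of $V_i$ with zero rows to fill out an $a_i\times a_i$ matrix. Left multiplication by $\Gl_{a_i}(\kk)$ is just invertible row operations on block row $i$, so reduction to row echelon form shows that the row space is a complete $G_l$-invariant. The right $G_r$-action on $\mathcal{M}_A$ transforms these row spaces precisely by $V_i\mapsto V_i\cdot(h|_{\mathbb{V}_i})$, matching the conjugation action of the previous paragraph, and hence $G_l$-$G_r$-orbits on $\mathcal{M}_A$ correspond bijectively to conjugacy classes of nilpotent left ideals.

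The main technical point I expect to have to argue carefully is that the right $G_r$-action is coordinated across all block rows: the factor $\Gl_{r_j}(\kk)$ acts simultaneously on the $j$-th component of every $\mathbb{V}_i$ with $j\in J_i$. This coordination is what forces the induced action on the tuples $(V_1,\ldots,V_k)$ to coincide with the conjugation action on ideals and not decouple into an uncoupled product of actions on the individual $V_i$.
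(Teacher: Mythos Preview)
Your proposal is correct and follows essentially the same approach as the paper: decompose a nilpotent left ideal via the central idempotents $f_i$, identify $A_i$-submodules of $f_iJ(A)$ with subspaces of $\mathbb{V}_i$ via row spaces (using matrix units in $A_i$), reduce the $U(A)$-conjugation to the $G_r$-action by killing the $1+J(A)$ part, and use row-echelon considerations to show that the $G_l$-orbit of a block matrix is determined by the tuple of row spaces. The only cosmetic difference is that the paper simplifies $uLu^{-1}$ to $Lu^{-1}$ immediately (since $L$ is a left ideal), whereas you compute the conjugation directly; your emphasis on the coordination of the $G_r$-action across block rows is implicit in the paper but is indeed the crux of the matching.
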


\begin{proof}

Let $L \in N(A)$. Then, according to \eqref{postacx} we have
$$L = AL = (A' + J(A))L = A'L =  A_1L \oplus A_2L \oplus \ldots \oplus A_kL.$$ Since $J(A)^2 = 0$
and $A_iA_j = 0$, for $1 \leq i,j \leq k$, it is clear that: $AA_iL = A'A_iL = A_iA_iL = A_iL.$
 Thus, every left ideal $L$ of $A$ that is contained in $J(A)$ can be expressed as
a direct sum of left ideals $L_{i}=A_{i}L$ of $A$ that are contained in $A_iJ(A)$, respectively, for
 $1 \leq i \leq k$. Therefore, it is enough to prove that there exists a 1-1 correspondence between
left ideals of $A$ contained in $A_iJ(A)$ and linear subspaces of $\mathbb{V}_i$.

Moreover, 
$$L_{i} = f_{i}L= f_iL_{i} = \bigoplus\limits_{e \in E_i}eL_{i},$$ where $E_i$ is a set of primitive orthogonal idempotents
of rank one in $A_i$, according to our earlier notation. Also, if $e_p, e_q$ are idempotents
contained in the same $E_i$, then the subspaces $e_pL_{i}, e_qL_{i}$, treated as linear subspaces of $\kk^nx$,
are equal to $W_i x$, for some subspace
$W_i \subseteq \mathbb{V}_i$.

Conversely, for every $i$ choose a linear subspace $V_{i} \subseteq \mathbb{V}_i$. Consider the set $L_{i}$ of matrices of sizes
 $r_i \times n$, whose every row is (as a vector) contained in $V_{i}x$. It is clear, that this set forms a left ideal
of $A$, contained in $A_iJ(A)$. Thus $L=\oplus_{i=1}^{k}L_{i}$ is a left ideal of $A$. It is clear that this leads to a 1-1
correspondence, as stated in the first part of the proposition.

In order to prove the second statement, recall that $r_{1}+\cdots +r_{k}=n$ and consider a block matrix $M= (M_{ij}) \in M_{(a_{1}+\cdots +a_{k})\times n}(\kk)$ such that 
$M_{ij} \in M_{a_i \times r_j}(\kk)$ and the block $M_{ij} = 0$ if $J_{ij} = 0$. Denote by $M_{i}$ the $a_{i}\times n$ - submatrix of $M$ such that $M_i$ can be treated as a block matrix with consecutive blocks
$M_{ij}$, $j=1,\ldots, k$. Let $V_{i}\subseteq \kk^{n}$ be the linear span of the set of all rows of $M_{i}$. By the definition of the integers $a_{i}$ it is clear that $V_{i}\subseteq \mathbb{V}_i\simeq \kk^{a_{i}}$. 
Let $L$ be the left ideal of $A$ corresponding to the $k$-tuple of spaces $(V_{1},\ldots, V_{k})$. Then $f(M)=L$ defines a map $f: \mathcal{M}_A\rightarrow N(A)$. 

On the other hand, for every $L\in N(A)$, which is determined as in the first part of the proof by a $k$-tuple of spaces $V_{i}\subseteq \mathbb{V}_i\subseteq \kk^{n}$, we know that $\dim (V_{i})\leq a_{i}$. Hence, there exists a matrix $M$ such that $f(L)=M$ (we form matrices $M_{i} \in M_{a_i \times n}(\kk)$, by choosing as rows an arbitrary set of $a_{i}$ vectors that span $V_{i}$, for every $i=1,\ldots, k$; then they define $M$ in a natural way). So $f$ is a surjective map. 

Next, assume that two matrices $M,M'\in M_{(a_{1}+\cdots +a_{k})\times n}(\kk)$ are given and $f(M)=f(M')$. The latter is equivalent to $V_{i}=V_{i}'$ for every $i$, where $V_{1},\ldots, V_{k}$ and $V_{1}',\ldots, V_{k}'$ are the row spaces of the corresponding matrices $M_{1},\ldots, M_{k}$ and $M_{1}',\ldots ,M_{k}'$, respectively. This means that the reduced row echelon forms of the matrices $M_{i}$ and $M_{i}'$ are equal, for every $i$. Or, equivalently, there exists $g_{i}\in \Gl_{a_{i}}(\kk)$ such that $g_{i}M_{i}=M_{i}'$. This means that there exists $g\in G_{l}$ such that $gM=M'$ ($g$ is block diagonal with consecutive blocks $g_{1},\ldots ,g_{k}$). It follows that the set of $G_{l}$-orbits on ${\mathcal M}_{A}$ (acting by left multiplication) is in a bijection with the elements of $N(A)$.
 
It is clear that two left ideals $L,L'\in N(A)$ are conjugate in $A$ if and only if $L'=Lu$ for some $u\in U(A)$. 
Using \eqref{postacx} we can write
$$ U(A) = \begin{bmatrix} \Gl_{r_1}(\kk) & \ldots & 0 \\ \vdots & \ddots &
 \vdots \\ 0 & \ldots & \Gl_{r_k}(\kk) \end{bmatrix} + J(A).$$ Hence, we may assume that $u\in G_{r}$. 
 
It is clear that linear spaces $V_{1}',\ldots ,V_{k}'$, ($V_{i}'\subseteq \kk^{n}$), corresponding to $Lu$ are of the form $V_{1}u,\ldots, V_{k}u$, where $V_{1},\ldots ,V_{k}$ are the subspaces corresponding to $L$. On the other hand, for any matrix $M\in {\mathcal M}_{A}$ and any $u\in G_{r}$, the row blocks $M_{1}',\ldots, M_{k}'$ that form the matrix $Mu$ satisfy $M_{i}'=M_{i}u$ for every $i$. Clearly, the row space of the matrix $M_{i}u$ is equal to $V_{i}u$. Therefore, $f(Mu) = Lu$. Since the set of $G_{l}$-orbits on ${\mathcal M}_{A}$ is in a bijection with the elements of $N(A)$ (via $f$), this easily implies that  $f$ induces a bijection between double cosets $G_{l}MG_{r}$, $M\in {\mathcal M}_{A}$,  and the conjugacy classes of left ideals of $A$ contained in $J(A)$.
\end{proof}

Next we will show, that the problem of finiteness for the semigroup
$C(A)$ can be reformulated in the representation theoretical language. First, let us recall certain notions.\\

Recall that in our setting, the separated quiver $\Gamma^s(A)$ of $A$ consists of $2k$ vertices $\{1, 2, \ldots, k\} \times \{0,1\},$ and there exists an arrow $(i, \epsilon') \to (j, \epsilon'')$ in $\Gamma^s(A)$ if $\epsilon' = 0, \epsilon'' = 1$ and $e'_iJ(A)e'_j \neq 0,$ where $e'_i, e'_j$ are any primitive idempotents taken from the sets $E_i, E_j$, defined in \eqref{zbiorye}. In this case there are precisely $d_{ij}$ arrows $(i,0) \to (j,1)$, where $d_{ij} := \dim_{\kk}{e'_iJ(A)e'_j}.$ We recall from \cite{pie}, Corollary 2.4c, that if the lattice $I(A)$ is distributive, then $\dim_{\kk}{eJ(A)f} \leq 1$, for any primitive idempotents $e, f \in A$. Therefore, the number of arrows between any two vertices in the quiver of an algebra is always equal either to $0$ or $1$.

Also, recall that for a quiver $Q =  (Q_0, Q_1)$ with the set of vertices $Q_0$ and the set of arrows $Q_1$ one may consider a $\kk$-linear representation, or more briefly, a representation $M = (M_a,\phi_a)_{a \in Q_0, \alpha \in Q_1}$ of $Q$, where $M_a$ is a $\kk$-linear space and $\phi_{\alpha}: M_a \to M_b$ is a linear map, for $\alpha = (a,b)$.

Assume that $Q_0$ has $n$ elements $\{1, \ldots, n\}$ and $d = (d_1, \ldots, d_n) \in \mathbb{N}^n$. By $rep_d(Q)$ we denote the set of representations of $Q$ with $M_i = \kk^{d_i}$, for all $i \in Q_0$. Consider the following linear space.
\begin{equation}\mathbb{A}_Q(d) = \prod\limits_{(i,j) \in Q_1}{\emm_{d_{j} \times d_{i}}(\kk)}.\label{repspace}\end{equation}
This object is known as the representation space corresponding to the dimension vector $d$. It admits a natural structure of an affine space. One can define an action of the affine algebraic group $G(d) = \prod_{i \in Q_0}{\Gl_{d_i}(\kk)}$ on $\mathbb{A}_Q(d)$ via the conjugation formula:
\begin{equation}(g \cdot x)_{\alpha} = g_{j} \cdot x_{\alpha} \cdot g_i^{-1},\label{klasyrep}\end{equation} where $g = (g_i) \in G(d)$, whereas $x_{\alpha}$ and $(g \cdot x)_{\alpha}$ are the matrices standing on the $\alpha$-th coordinates in $\mathbb{A}_Q(d)$ and $g \cdot \mathbb{A}_Q(d)$, respectively,  where $\alpha = (i,j) \in Q_1$. It is well known that two representations $M$ and $N$ in $rep_d(Q)$ are isomorphic if and only if the (natural) representatives of  $M$ and $N$ in $\mathbb{A}_Q(d)$ belong to the same $G(d)$-orbit, (see \cite{ass3}, XX.2).

We may now come back to the algebra $A$ and to the finiteness problem of the semigroup $C(A)$.

\begin{proposition}\label{repr} For an algebra $A$ with $J(A)^2 = 0$, consider an element $d \in \mathbb{N}^{2k}$ of the form:
\begin{equation} d = (a_1, \ldots, a_k, r_1, \ldots, r_k), \label{vector}\end{equation} where $a_i$ are as in Proposition \ref{reprezentacja}. Let $Q = \overline{\Gamma^s(A)}$  be the quiver obtained from the separated quiver $\Gamma^s(A)$ by inverting all its arrows and let $\mathbb{A}_Q(d)$ stand for the algebraic variety corresponding to the set $rep_d(\overline{\Gamma^s(A)})$ of representations of $Q$ with the dimension vector $d$. There is a 1-1 correspondence between the set of conjugacy classes of nilpotent left ideals of $A$ and the set of orbits of the action \eqref{klasyrep} of $G(d)$ on $\mathbb{A}_Q(d)$.
\end{proposition}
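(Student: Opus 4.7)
The plan is to deduce the statement directly from Proposition~\ref{reprezentacja} by recognizing that the matrix set ${\mathcal M}_A$ with its $G_{l}\times G_{r}$-action is, up to a relabeling of factors, exactly the representation space $\mathbb{A}_Q(d)$ with the $G(d)$-action. So the main task is to set up an equivariant identification, not to do any real combinatorial or geometric work.

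First, I would identify the combinatorial data. By construction of $\Gamma^{s}(A)$, an arrow $(i,0)\to (j,1)$ exists precisely when $e'_iJ(A)e'_j\neq 0$, i.e. when $J_{ij}\neq 0$; in the distributive case there is at most one such arrow. Reversing all arrows, the quiver $Q=\overline{\Gamma^s(A)}$ has an arrow $\alpha_{ij}\colon (j,1)\to (i,0)$ exactly when $J_{ij}\neq 0$. With the dimension vector $d=(a_1,\ldots ,a_k,r_1,\ldots ,r_k)$, the source of $\alpha_{ij}$ carries dimension $r_{j}$ and its target carries dimension $a_{i}$, so the coordinate of $\mathbb{A}_Q(d)$ attached to $\alpha_{ij}$ is the space $M_{a_i\times r_j}(\kk)$. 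Comparing this with the definition of $\mathcal{M}_A$, I obtain the tautological linear isomorphism
\[
\Phi:\mathcal{M}_A\longrightarrow \mathbb{A}_Q(d),\qquad (M_{ij})\longmapsto (M_{ij})_{(i,j):\, J_{ij}\neq 0},
\]
which just reads the nonzero blocks of a matrix in $\mathcal{M}_A$ as the coordinates of a representation of $Q$.

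Next I would match the group actions. An element $g_{l}\in G_{l}$ is block diagonal with blocks $g_{l}^{i}\in \Gl_{a_i}(\kk)$, and acts on $\mathcal{M}_A$ by sending the block $M_{ij}$ to $g_{l}^{i}M_{ij}$; similarly $g_{r}\in G_{r}$, with blocks $g_{r}^{j}\in \Gl_{r_j}(\kk)$, sends $M_{ij}$ to $M_{ij}g_{r}^{j}$. On the other hand, an element $g=(g_{(i,0)},g_{(j,1)})\in G(d)$ acts on the coordinate indexed by $\alpha_{ij}$ by $x_{\alpha_{ij}}\mapsto g_{(i,0)}x_{\alpha_{ij}}g_{(j,1)}^{-1}$, according to \eqref{klasyrep}. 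The group isomorphism
\[
G_{l}\times G_{r}\longrightarrow G(d),\qquad (g_{l},g_{r})\longmapsto \bigl(g_{(i,0)}=g_{l}^{i},\ g_{(j,1)}=(g_{r}^{j})^{-1}\bigr),
\]
then intertwines these actions, so $\Phi$ is equivariant and the orbit spaces coincide.

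Finally, combining $\Phi$ with the bijection of Proposition~\ref{reprezentacja} between $G_{l}$-$G_{r}$-orbits on $\mathcal{M}_A$ and the set $C_J(A)$ of conjugacy classes of nilpotent left ideals of $A$, I get the desired bijection between conjugacy classes of nilpotent left ideals of $A$ and $G(d)$-orbits on $\mathbb{A}_Q(d)$. There is no real obstacle in this argument; the only point requiring care is keeping track of directions — reversing the arrows of $\Gamma^{s}(A)$ corresponds exactly to the fact that $G_{l}$ acts on the left while the $G(d)$-action on target vertices appears on the left as well, so the vector $d$ has to list the $a_i$'s (targets in $Q$) before the $r_j$'s (sources in $Q$), as in \eqref{vector}.
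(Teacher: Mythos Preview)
Your proposal is correct and follows exactly the same approach as the paper: invoke Proposition~\ref{reprezentacja}, identify $\mathcal{M}_A$ with $\mathbb{A}_Q(d)$ block by block, and note that the group isomorphism $(g_l,g_r)\mapsto (g_l,g_r^{-1})$ makes the two actions agree. The paper's proof is simply a terser version of yours, stating these identifications without spelling out the arrow directions and block sizes as carefully as you do.
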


\begin{proof} 

According to Proposition~\ref{reprezentacja} the set $C_J(A)$ of conjugacy classes of nilpotent left ideals of $A$ is in 1-1 correspondence with the set of $G_l - G_r$-orbits on the set  $\mathcal{M}_A$ of block matrices. Clearly, $\mathcal{M}_A \simeq \mathbb{A}_Q(d)$. Since $G(d) = G_l \times G_r$, the element $g_1 \cdot M \cdot g_2$ is a conjugate of $M$ under the action \eqref{klasyrep} with $g = (g_1, g_2^{-1}) \in G(d)$. Thus, the result follows.
\end{proof}

The finiteness of the set $C_J(A)$ is equivalent to the finiteness of $C(A)$, see \cite{mec}. As a result, we obtain the desired characterization of algebras $A$ such that $C(A)$ is finite in the language of representation theory of $A$.

\begin{corollary}\label{collrep} Let $A$ be a radical square zero finite dimensional distributive algebra over an algebraically closed field. Let $d$ be the dimension vector defined in \eqref{vector}. The following conditions are equivalent:
\begin{itemize}
\item[(1)] the semigroup $C(A)$ is finite,
\item[(2)] the number of isomorphism classes of representations in the set $rep_{d}(\overline{\Gamma^s(A)})$ is finite.
\end{itemize}
\end{corollary}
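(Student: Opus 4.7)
The plan is to chain together three facts, two of which are already stated in the excerpt and one of which is explicitly invoked just before the corollary. The only work is to make sure the finiteness statements line up, since there is no quantitative content beyond cardinality.

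First I would reduce from $C(A)$ to $C_{J}(A)$, the set of conjugacy classes of nilpotent left ideals of $A$. This step uses the remark from \cite{mec} quoted immediately before the corollary, namely that $|C(A)|$ is finite if and only if $|C_{J}(A)|$ is finite. So (1) is equivalent to the finiteness of $C_{J}(A)$.

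Next I would apply Proposition~\ref{repr}: since that proposition produces a bijection between $C_{J}(A)$ and the set of $G(d)$-orbits under the action \eqref{klasyrep} on $\mathbb{A}_{Q}(d)$, where $Q=\overline{\Gamma^{s}(A)}$ and $d$ is the dimension vector \eqref{vector}, the finiteness of $C_{J}(A)$ is equivalent to the finiteness of the orbit set $\mathbb{A}_{Q}(d)/G(d)$.

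Finally I would invoke the standard representation-theoretic dictionary recalled above (the reference to \cite{ass3}, XX.2): two points of $\mathbb{A}_{Q}(d)$ lie in the same $G(d)$-orbit if and only if the representations of $Q$ with dimension vector $d$ that they represent are isomorphic. Consequently $\mathbb{A}_{Q}(d)/G(d)$ is in bijection with the set of isomorphism classes of representations in $rep_{d}(\overline{\Gamma^{s}(A)})$, so its finiteness is exactly condition (2). Composing the three bijections (or equivalences) yields (1) $\Leftrightarrow$ (2). There is no genuine obstacle here since Proposition~\ref{repr} already supplies the identification of the orbit spaces; the corollary is just the cardinality shadow of that proposition together with the $C(A)$-vs-$C_{J}(A)$ reduction.
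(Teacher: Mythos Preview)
Your proposal is correct and is precisely the argument the paper intends: the corollary is stated without a separate proof because it is just the cardinality statement obtained by combining Proposition~\ref{repr} (the bijection $C_{J}(A)\leftrightarrow \mathbb{A}_{Q}(d)/G(d)$), the orbit/isomorphism-class dictionary recalled before that proposition, and the $C(A)\leftrightarrow C_{J}(A)$ reduction from \cite{mec} quoted immediately before the corollary. There is nothing to add.
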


\section{Proof of the main theorem} \label{proof}

In this section we prove the main result of this paper. Recall that by the Tits quadratic form $q_Q$ of a quiver $Q = (Q_0, Q_1)$ with $Q_0 = \{1, 2, \ldots, n\}$ we mean the following integral form (see \cite{ass1}, VII.4):
\begin{equation} q_Q(d) = \sum\limits_{i \in Q_0}{d_i^2} - \sum\limits_{(i,j) \in Q_1}{d_id_j}, \quad \text{ where } \quad d = (d_1, \ldots, d_n)\in \mathbb{Z}^n \label{titsform}\end{equation}

Recall the action \eqref{klasyrep}, for the dimension vector $d$.
Then the first summand of the sum \eqref{titsform} is clearly the
dimension of the group $G(d)$ (as an algebraic variety) and the
second one is the dimension of the algebraic variety
$\mathbb{A}_Q(d)$ associated to the representation space $rep_d(Q)$.
By a classical argument of Tits it is known that if
$q_Q(d)\leq 0$, for some nonzero $d$ with nonnegative
coordinates, then the number of isomorphism classes in $rep_d(Q)$
is infinite. Indeed, this follows from the fact, that the set $F =
\{(a \cdot id_{d_1}, \ldots, a \cdot id_{d_n} , a^{-1} \cdot
id_{d_1}, \ldots, a^{-1} \cdot id_{d_n}) \, | \, a \in \kk^*\}, $
where $\kk^*$ stands for the multiplicative group of $\kk$, acts
trivially on the variety $\mathbb{A}_Q(d)$. Thus if $q_Q(d)
\leq 0$, then the dimension of the group $G(d)/F$ acting on the
variety $\mathbb{A}_Q(d)$ is smaller than the dimension of the
variety itself. Thus the number of elements in $rep_d(Q)$ is
infinite, see \cite{pie}, 8.8. Vectors $d$ such that $q_Q(d)
= 0$ constitute the, so-called, radical of the integral form
$q_Q$.

Let $\leq$ be the natural coordinate-wise order on the set
$\mathbb{Z}^n$, namely $(p_1, \ldots, p_n) \leq (q_1, \ldots,
q_n)$ if and only if $p_i \leq q_i$, for all $1 \leq i \leq n$.
Recall that $x \in \mathbb{Z}^n$ is called positive if $x \neq 0$
and $x_i \geq 0$ for all $i$. We will simply say that $x > 0$. The
following lemma is crucial.

\begin{lemma}\label{podkontur} Consider an algebra $A$ of the form \eqref{postacx} and its reversed-arrow separated graph $Q = \overline{\Gamma^s(A)}$. Assume that the semigroup $C(A)$ is finite. Let
$d = (d_1, \ldots, d_{2k})$, where $d_i = a_i$, for $1 \leq i \leq k$ and $d_{k+j} = r_j$, for $1 \leq j \leq k$. Consider a
subquiver $Q' \subseteq Q$ with vertices $i_1, \ldots, i_l \subseteq \{1, 2,
\ldots ,2k\}$ and $e = (d_{i_1}, \ldots, d_{i_l})$. Then no positive
vector $e' \leq e$ is in the radical of the quadratic form $q_{Q'}$.
\end{lemma}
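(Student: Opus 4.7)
My plan is to argue by contradiction: assume for some positive $e' \le e$ one has $q_{Q'}(e')=0$, and aim to exhibit infinitely many isomorphism classes in $rep_{d}(Q)$, contradicting Corollary~\ref{collrep}.

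First, I would apply the Tits argument recalled at the beginning of this section to the quiver $Q'$ together with the positive dimension vector $e'$: since $q_{Q'}(e') \le 0$, the dimension of the acting group $G(e')$, modulo the one-parameter scalar subgroup that acts trivially, is strictly less than the dimension of the variety $\mathbb{A}_{Q'}(e')$, so $rep_{e'}(Q')$ contains infinitely many pairwise non-isomorphic representations.

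Next, using the hypothesis $e' \le e = (d_{i_1},\ldots,d_{i_l})$, I would associate to each $M \in rep_{e'}(Q')$ a representation $\tilde M \in rep_d(Q)$ obtained by padding with a fixed zero summand: at each vertex $i_s \in Q'_0$ set $\tilde M_{i_s} = M_{i_s} \oplus \kk^{d_{i_s}-e'_s}$; at each vertex $j \in Q_0 \setminus Q'_0$ set $\tilde M_j = \kk^{d_j}$; on each arrow $\alpha \in Q'_1$ let $\tilde M_\alpha$ act as $M_\alpha$ on the first summand and as zero on the complementary summand; and set $\tilde M_\alpha = 0$ for every arrow $\alpha \in Q_1 \setminus Q'_1$ (this includes, in particular, any arrow of $Q$ with at least one endpoint outside $Q'_0$).

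The main obstacle is to verify that the assignment $M \mapsto \tilde M$ descends to an injection on isomorphism classes. For this, I would observe that the restriction $\tilde M|_{Q'}$ splits as $M \oplus T$, where $T$ is the $Q'$-representation with $T_{i_s} = \kk^{d_{i_s}-e'_s}$ and all arrow maps zero; crucially, $T$ depends only on the pair $(d,e')$ and not on $M$. Hence any isomorphism $\tilde M \cong \tilde M'$ in $rep_d(Q)$ restricts to an isomorphism $M \oplus T \cong M' \oplus T$ of representations of $Q'$, and the Krull--Schmidt cancellation theorem for finite-dimensional representations of $Q'$ over the field $\kk$ yields $M \cong M'$. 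Thus the infinite family in $rep_{e'}(Q')$ lifts to an infinite family of pairwise non-isomorphic objects in $rep_d(Q)$, contradicting Corollary~\ref{collrep} and completing the argument.
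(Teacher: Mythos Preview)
Your proof is correct and follows essentially the same strategy as the paper's: argue by contradiction, apply the Tits dimension count to obtain infinitely many non-isomorphic representations at a smaller dimension vector, and then embed these into $rep_d(Q)$ by direct-summing with a fixed complement. The paper first passes to the full subquiver $Q''$ on the vertex set of $Q'$ so that Tits can be applied on $Q$ itself (via $q_{Q'}\ge q_{Q''}$ and $q_Q(d_{e'})=q_{Q''}(e')$), whereas you apply Tits directly on $Q'$ and lift; your explicit appeal to Krull--Schmidt also makes transparent a cancellation step the paper leaves implicit in its direct-sum embedding.
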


\begin{proof} Consider the full subquiver $Q''$ of $Q$ with the same vertex set as $Q'$. Clearly $q_{Q'}(v) \geq q_{Q''}(v)$, for any dimension vector $v$.

 Let  $d_e$ be the dimension vector that agrees with $d$ on the coordinates $i_1, \ldots, i_l$ and its remaining coordinates are equal to $0$. Suppose that a positive vector $e' \leq e$ appears in the radical of $q_{Q'}$. Then $q_{Q''}(e') \leq 0$, and thus for the (obviously defined from $e'$) positive vector $d_{e'} \leq d_e \leq d$, we have $q_{Q}(d_{e'}) =q_{Q''}(e') \leq 0$. Then from the argument of Tits we know
that there are infinitely many isomorphism classes of representations in $rep_{d_{e'}}(Q)$. Consider the
dimension vector $d- d_{e'}$. Its coordinates are all nonnegative and we can consider the representation
space $rep_{d-d_{e'}} (Q)$. Then the (obviously defined) direct sum $rep_{d_{e'}} (Q) \oplus rep_{d-d_{e'}} (Q)$ is naturally contained in $rep_{d}(Q)$, see [1], III.1. Therefore $rep_{d}(Q)$ has infinitely many isomorphism classes. Since $C(A)$ was assumed to be finite, this contradicts Corollary \ref{collrep}. The assertion follows. \end{proof}

\noindent The proof of our main result follows naturally.\\

\noindent \begin{proofmain} One implication is clear. If $\Gamma^{s}(A)$ is a disjoint union of Dynkin graphs, then by
 Gabriel's theorem $A$ is of finite representation type and from Theorem~6 in \cite{okr} we know that
$C(A)$ is finite.

Assume now that the semigroup $C(A)$ is finite. From Corollary~\ref{collrep} we know that the number of isomorphism classes of representations of $\overline{\Gamma^s(A)}$ for the dimension vector $d$ is finite. We will show that the separated graph $\Gamma^s(A)$ of $A$ is a disjoint union of Dynkin
graphs.

Without losing generality we may assume that $Q = \Gamma^s(A)$ is connected. Assume, to the contrary, that $Q$ is not Dynkin. Then it is known that $Q$, as an unoriented graph, must contain one of the following Euclidean graphs (see \cite{ass1}, Lemma VII.2.1):

\begin{center}
\psset{unit=0.8cm}
\begin{pspicture}(0,4)(6.5,4.6)
\rput(0,4){\mbox{$\widetilde{A_n}$}} \rput(7,4){\mbox{$ n \geq 2$}}
\rput(0,3){\mbox{$\widetilde{D_n}$}} \rput(7,3){\mbox{$ n \geq 4$}}
\psdot[dotscale=1](1,4) \psline(1,4)(1.5,4) \psdot[dotscale=1](1.5,4) \psline(1.5,4)(2,4)
\psdot[dotscale=1](2,4) \psline(2,4)(2.5,4) \psdot[dotscale=1](2.5,4) \psdot[dotscale=0.5](2.7,4)
\psdot[dotscale=0.5](2.9, 4) \psdot[dotscale=0.5](3.1 ,4) \psdot[dotscale=0.5](3.3, 4)
\psdot[dotscale=1](3.5,4) \psline(3.5,4)(4,4) \psdot[dotscale=1](4,4)  \psdot[dotscale=1](2.5,4.5)
\psline(1,4)(2.5,4.5) \psline(4,4)(2.5,4.5)

\psdot[dotscale=1](1,3.3) \psdot[dotscale=1](1,2.7)  \psline(1,2.7)(1.5,3) \psline(1,3.3)(1.5,3)
\psdot[dotscale=1](1.5,3) \psline(1.5,3)(2,3) \psdot[dotscale=1](2,3) \psline(2,3)(2.5,3)
\psdot[dotscale=1](2.5,3) \psdot[dotscale=0.5](2.7,3) \psdot[dotscale=0.5](2.9, 3)
\psdot[dotscale=0.5](3.1 ,3) \psdot[dotscale=0.5](3.3, 3) \psdot[dotscale=1](3.5,3)
\psline(3.5,3)(4,3.3) \psline(3.5,3)(4,2.7) \psdot[dotscale=1](4,3.3) \psdot[dotscale=1](4,2.7)

\end{pspicture}
\end{center}

\begin{center}
\psset{unit=0.8cm}
\begin{pspicture}(0,-0.2)(6.5,4.3)
\rput(0,2){\mbox{$\widetilde{E_6}$}}
\rput(0,1){\mbox{$\widetilde{E_7}$}}
\rput(0,0){\mbox{$\widetilde{E_8}$}}

\psdot[dotscale=1](1,2) \psline(1,2)(1.5,2) \psdot[dotscale=1](1.5,2) \psline(1.5,2)(2,2)
\psdot[dotscale=1](2,2) \psline(2,2)(2,2.5) \psdot[dotscale=1](2,2.5) \psline(2,2.5)(2,3)
\psdot[dotscale=1](2,3) \psline(2,2)(2.5,2) \psdot[dotscale=1](2.5,2) \psline(2.5,2)(3,2)
\psdot[dotscale=1](3,2)

\psdot[dotscale=1](1,1) \psline(1.5,1)(1,1) \psdot[dotscale=1](1.5,1) \psline(1.5,1)(2,1)
\psdot[dotscale=1](2,1) \psline(2,1)(2.5,1) \psdot[dotscale=1](2.5,1) \psline(2.5,1)(2.5,1.5)
\psdot[dotscale=1](2.5,1.5) \psline(2.5,1)(3,1) \psdot[dotscale=1](3,1) \psline(3,1)(3.5,1)
\psdot[dotscale=1](3.5,1) \psline(3.5,1)(4,1) \psdot[dotscale=1](4,1)

\psdot[dotscale=1](1,0) \psline(1,0)(1.5,0) \psdot[dotscale=1](1.5,0) \psline(1.5,0)(2,0)
\psdot[dotscale=1](2,0) \psline(2,0)(2,0.5) \psdot[dotscale=1](2,0.5) \psline(2,0)(2.5,0)
\psdot[dotscale=1](2.5,0) \psline(2.5,0)(3,0) \psdot[dotscale=1](3,0) \psline(3,0)(3.5,0)
\psdot[dotscale=1](3.5,0) \psline(3.5,0)(4,0) \psdot[dotscale=1](4,0) \psline(4.5,0)(4,0)
\psdot[dotscale=1](4.5,0)

\end{pspicture}
\end{center}

Let $E$ be such an unoriented Euclidean subgraph. Let $e$ be
the projection of the dimension vector $d$ to the coordinates that
correspond to the vertices of $E$, as in Lemma \ref{podkontur}.
Observe that $r_i > 0$ for $1 \leq i \leq k$. Suppose that one of
the first $k$ coordinates of $d$, namely one of the numbers $a_i$,
is equal to $0$, see \eqref{ai}. We know that the $i$-th
coordinate of $d$, where $i \leq k$, corresponds to the vertex
$(i,0)$ of $\overline{Q}$. Assume that $(i,0)$ is a vertex of $E$.
By the definition of the separated quiver $Q$ of $A$, only
arrows of the form $(m,0) \to (j,1)$ may appear in $Q$ and since $E$
is a connected subquiver, an arrow of the form $(i,0) \to (j,1)$ must appear. Thus the
$i$-th block row of $J(A)$ in \eqref{postacx} cannot be zero.
Therefore $a_i \neq 0$ and all coordinates of $e$ are positive. By
the assumption on $A$ we know that the numbers $r_i$, appearing in
the decomposition of $A/J(A)$, are not smaller than $6$. Thus all
of the coordinates of $e$ are not smaller than $6$.

It is known that the radicals of the quadratic forms of the
quivers with the underlying graphs being Euclidean are of form
$\mathbb{Z}q$, where $q$ is of one of the following types,
see \cite{ass1}, Lemma VII.4.2:

\begin{center}
\psset{unit=0.7cm}
\begin{pspicture}(0,1)(15.5,-1)
\rput(-0.5,0){\mbox{$1$}}
\rput(0,0){\mbox{$1$}}
\psdot[dotscale=0.4](0.3,-0.1)
\psdot[dotscale=0.4](0.6,-0.1)
\psdot[dotscale=0.4](0.9,-0.1)
\rput(0.6,0.5){\mbox{$1$}}
\rput(1.2,0){\mbox{$1$}}
\rput(1.7,0){\mbox{$1$}}
\rput(2,0){\mbox{$,$}}

\rput(2.8,-0.3){\mbox{$1$}}
\rput(2.8,0.3){\mbox{$1$}}
\rput(3,0){\mbox{$2$}}
\psdot[dotscale=0.4](3.3,-0.1)
\psdot[dotscale=0.4](3.6,-0.1)
\psdot[dotscale=0.4](3.9,-0.1)
\rput(4.2,0){\mbox{$2$}}
\rput(4.4,-0.3){\mbox{$1$}}
\rput(4.4,0.3){\mbox{$1$}}
\rput(4.7,0){\mbox{$,$}}

\rput(5.5,-0.5){\mbox{$1$}}
\rput(6,-0.5){\mbox{$2$}}
\rput(6.5,-0.5){\mbox{$3$}}
\rput(6.5,0){\mbox{$2$}}
\rput(6.5,0.5){\mbox{$1$}}
\rput(7,-0.5){\mbox{$2$}}
\rput(7.5,-0.5){\mbox{$1$}}
\rput(7.8,0){\mbox{$,$}}

\rput(8.5,-0.3){\mbox{$1$}}
\rput(9,-0.3){\mbox{$2$}}
\rput(9.5,-0.3){\mbox{$3$}}
\rput(10,-0.3){\mbox{$4$}}
\rput(10,0.3){\mbox{$2$}}
\rput(10.5,-0.3){\mbox{$3$}}
\rput(11,-0.3){\mbox{$2$}}
\rput(11.5,-0.3){\mbox{$1$}}
\rput(11.8,0){\mbox{$,$}}

\rput(12.5,-0.3){\mbox{$2$}}
\rput(13,-0.3){\mbox{$4$}}
\rput(13.5,-0.3){\mbox{$6$}}
\rput(13.5,0.3){\mbox{$3$}}
\rput(14,-0.3){\mbox{$5$}}
\rput(14.5,-0.3){\mbox{$4$}}
\rput(15,-0.3){\mbox{$3$}}
\rput(15.5,-0.3){\mbox{$2$}}
\rput(16,-0.3){\mbox{$1$}}
\rput(16.3,0){\mbox{$.$}}

\end{pspicture}
\end{center}
where each of the generators listed above is presented with accordance to the structure of the respective graph: $\widetilde{\mathbb{A}_n}$, $\widetilde{\mathbb{D}_n}$, $\widetilde{\mathbb{E}_6}$, $\widetilde{\mathbb{E}_7}$ and $\widetilde{\mathbb{E}_8}$.

Let $r$ be a generator of the radical for the Euclidean graph $E$ considered above. Since all coordinates of $e$ are greater than or equal to $6$, then $r \leq e$. From Lemma~\ref{podkontur} it follows that $C(A)$ is infinite and we arrive to the contradiction with the supposition that $\Gamma^s(A)$ was not Dynkin. This concludes the proof.
\end{proofmain}

Finally, we derive Corollary~\ref{six}, which extends Theorem~7 in
\cite{okr}.\\

\noindent \begin{proofliczbaszesc} If $A$ is of finite representation type then so is $\emm_6(A)$ and $C(\emm_6(A))$ is finite by
Theorem 7 in \cite{okr}. On the other hand, if $C(\emm_6(A))$ is
finite then the algebra  $\emm_6(A)$ is distributive by Theorem~6
in~\cite{okr}. Moreover, it is clear that $J(\emm_6(A))^2 =
\emm_6(J(A))^2 = 0$ and that $\emm_6(A)$ satisfies the hypotheses
of Theorem~\ref{main}. Therefore, $\emm_6(A)$ is of finite
representation type by Theorem~\ref{main}. It follows that also
$A$ is of finite representation type, as desired.
\end{proofliczbaszesc}

A natural question arises whether the assertion of
Theorem~\ref{main} can be extended to a wider class of algebras,
and in particular whether the hypothesis $J(A)^2=0$ can be
dropped. From the point of view of our proof, the main problem
here is to find a translation of the conjugacy problem for left
ideals of $A$ contained in $J(A)$ in the language of an
appropriate matrix problem, as it is done in other classical situations, see for instance \cite{sim}.

\section{Remarks and questions}

We conclude with two remarks giving some insight into possible generalizations of our main result. They also put more light on the presence of number $6$ in our main result (though, the role played by this hypothesis is already clear from the proof of Theorem~\ref{main}).

As said before, in general it is not clear how to relate the conjugacy classes of left ideals to certain matrix problems, as we did with the radical square zero algebras. Nevertheless, there is a way of constructing matrix problems for any finite dimensional algebra that is a direct generalization of the one we have considered. For the dimension vector $d$, one may consider the representation space $rep_d(Q,I)$ of a quiver $Q$ bound by relations corresponding to an ideal $I\subseteq  J(\kk Q)^2$, see \cite{ass1}, III. The generalization of the geometric context of representations of quivers to the bound quivers case is quite natural, see \cite{ass3}, XX.2.

The use of quadratic forms in the general context of determining the representation type is common since the work of Gabriel \cite{gab2}. In \cite{bon2}, Bongartz introduced a generalization of the quadratic form of a quiver, by defining the Tits quadratic form $\widehat{q}_A: \mathbb{Z}^n \to \mathbb{Z}$ of the basic algebra $A$ with an acyclic quiver. It was shown that if the algebra $A$ has finite representation type then $\widehat{q}_A$ is weakly positive, that is: $\widehat{q}_A(x) > 0$, for all $x > 0$. The reverse implication is false in general, but it remains valid for many important classes of algebras of small global dimension: tilted algebras, double tilted algebras, quasitilted algebras, coil enlargements of concealed algebras, generalized multicoil algebras and others (see \cite{ass3}, XX.2).

How is the number $6$ related with these generalizations? A classical theorem of Ovsienko \cite{ovs} states that if a weakly positive integral quadratic form $q(x)$ has a root, that is such $x$ that $q(x) = 1$, then all its coordinates $x_i$ satisfy $x_i \leq 6$. Bongartz proved, that the following conditions are equivalent for the so-called simply connected algebras e.g. the algebras with the trivial fundamental group (for details, see \cite{ass3}, XX.2): (i) $A$ is representation finite, (ii) the Tits form $\widehat{q}_A$ of $A$ is weakly positive, (iii) $A$ does not admit a convex subalgebra $C$ which is critical. The latter is such an algebra $A$ that every proper convex subalgebra of $A$ is of finite representation type (see \cite{bon3}, for details). There exists a classification of critical algebras by means of the so-called Bongartz-Happel-Vossieck list, see \cite{ass2}, XIV. Each algebra $A$ on that list is defined as a path algebra of a bound quiver and a radical vector of the Tits quadratic form of $A$ is included. The coordinates of each of these radical vectors are less than or equal to $6$.
This suggests, that perhaps for the class of simply connected algebras the list of all possible ,,forbidden positive dimension vectors'' can be obtained similarly to the radical square zero case. These arguments motivate us to conclude this paper with the following problem.

\begin{problem} Consider finite dimensional distributive algebras $A$ over an algebraically closed field such that all simple $A$-modules have dimension at least $6$. Determine natural classes of such algebras for which the following conditions are equivalent:
\begin{itemize}
\item[(1)] the algebra $A$ is of finite representation type,
\item[(2)] the semigroup $C(A)$ is finite.
\end{itemize}
\end{problem}

\begin{tabular}{lll}
Arkadiusz M\c{e}cel & \quad \quad \quad \quad \quad & Jan Okni\'nski \\
\texttt{a.mecel@mimuw.edu.pl} & & \texttt{okninski@mimuw.edu.pl} \\
 & & \\
Institute of Mathematics & & \\
University of Warsaw & & \\
Banacha 2 & & \\
02-097 Warsaw, Poland & &
\end{tabular}


\begin{thebibliography}{99}
\itemsep=-2pt
\bibitem{ass1} Assem I., Simson D., Skowro\'nski A., Elements of the
Representation Theory of Associative Algebras. 1: Techniques of
Representation Theory, London Mathematical Society, Student Texts~65, Cambridge University Press, 2006.
\bibitem{bon2} Bongartz K., Algebras and quadratic forms, J. London Math. Soc.,
28(1983), 461--469.
\bibitem{bon3} Bongartz K., On representation-finite algebras and beyond, in: Advances in Representation Theory of Algebras, EMS Series of Congress Reports, 2014, 65--101.
\bibitem{gab2} Gabriel P., Unzerlegbare Darstellungen I, Manuscripta Math.,
6 (1972), 71--103.
\bibitem{ha1} Han J., Conjugate action in a left artinian ring,
Bull. Korean Math. Soc. 32 (1995), 35--43.
\bibitem{ha2} Han J., Group
actions in a regular ring, Bull. Korean Math. Soc. 42 (2005), 807--815.
\bibitem{hir} Hirano Y., Rings with finitely many orbits under
the regular action, in: Rings, Modules, Algebras, and Abelian
Groups, 343--348, Lecture Notes in Pure and Applied
Mathematics 236, Dekker, New York, 2004.
\bibitem{hry} Hryniewicka M., Krempa J., On
rings with finite number of orbits, Publ. Mat. 58 (2014),
233--249.
\bibitem{mec} M\c{e}cel A., Okni\'nski J., Conjugacy classes of left ideals of a
finite dimensional algebra, Publ. Mat. 57 (2013), 477--496.
\bibitem{mec2} M\c{e}cel A.,  On the finiteness of the semigroup of conjugacy classes of left ideals for algebras with radical square zero. Colloquium Mathematicum, 142 (2016), 1--49.
\bibitem{ok2} Okni\'nski J., Regular J-classes of subspace
semigroups, Semigroup Forum 65 (2002), 450--459.
\bibitem{okp} Okni\'nski J., Putcha M., Subspace semigroups,
J. Algebra 233 (2000), 87--104.
\bibitem{okr} Okni\'nski J., Renner L., Algebras with finitely many orbits, J.
Algebra 264 (2003), 479--495.
\bibitem{ovs} Ovsienko S.A., Integral weakly positive forms, in: Schur matrix problems,
and quadratic forms, Kiev (1978), 3--17.
\bibitem{pie} Pierce R.S., Associative Algebras, Graduate
Texts in Mathematics 88, Springer, 1982.
\bibitem{sim} Simson D., Linear
Representations of Partially Ordered Sets and Vector Space
Categories, Algebra, Logic and Applications, Gordon and Breach,
London, 1992.
\bibitem{ass2} Simson D., Skowro\'nski A., Elements of the Representation Theory
of Associative Algebras. 2: Tubes and Concealed Algebras of
Euclidean Type, London Mathematical Society, Student Texts 72, Cambridge University
Press, Cambridge, 2007.
\bibitem{ass3} Simson D., Skowro\'nski A., Elements of the Representation Theory of Associative
Algebras. 3: Representation-infinite Tilted Algebras, London Mathematical Society, Student Texts 72, Cambridge University
Press, Cambridge, 2007.
\end{thebibliography}
\end{document}